\documentclass[a4paper, 10pt, DIV10, headinclude=false, footinclude=false]{scrartcl}

\usepackage{amsthm, amsmath, amssymb, amsfonts}
\usepackage[utf8]{inputenc}
\usepackage[english]{babel}
\usepackage{url}
\usepackage{mathtools}

\usepackage{tikz}
\usepackage{tikz-3dplot}
\tdplotsetmaincoords{70}{110}
\usepackage{pgfplots}
\usepackage{pgfplotstable}
\usepackage{booktabs}


%
\numberwithin{figure}{section}
\numberwithin{table}{section}
\numberwithin{equation}{section}

\newenvironment{abstr}[1]{ \vspace{.05in}\footnotesize
	\parindent .2in
	{\upshape\bfseries #1. }\ignorespaces}{\par\vspace{.1in}}
\newenvironment{Abstract}{\begin{abstr}{Abstract}}{\end{abstr}}
\newenvironment{keywords}{\begin{abstr}{Key words}}{\end{abstr}}

\newtheorem{theorem}{Theorem}[section]
\newtheorem{lemma}[theorem]{Lemma}

\newtheorem{proposition}[theorem]{Proposition}

\theoremstyle{definition}

\newcommand{\Veps}{V_{\varepsilon}}
\newcommand{\Aeps}{A_{\varepsilon}}
\def\dx{\,\text{d}x}

\allowdisplaybreaks[4]

\begin{document}
	
	\title{From Domain Decomposition to Homogenization Theory%
	}
	\author{Daniel Peterseim\footnotemark[2] \and Dora Varga\footnotemark[2] \and Barbara Verf\"urth\footnotemark[2]}
	\date{}
	\maketitle
	
	\renewcommand{\thefootnote}{\fnsymbol{footnote}}
	\footnotetext[2]{Institut für Mathematik, Universität Augsburg, Universitätsstr. 14, D-86159 Augsburg}
	\renewcommand{\thefootnote}{\arabic{footnote}}
	
	\begin{Abstract}
		This paper rediscovers a classical homogenization result for a prototypical linear elliptic boundary value problem with periodically oscillating diffusion coefficient. Unlike classical analytical approaches such as asymptotic analysis, oscillating test functions, or two-scale convergence, the result is purely based on the theory of domain decomposition methods and standard finite elements techniques. The arguments naturally generalize to problems far beyond periodicity and scale separation and we provide a brief overview on such applications.
	\end{Abstract}
	
	\begin{keywords}
		domain decomposition, homogenization, multiscale method, finite elements
	\end{keywords}
	
\section{Introduction}
\label{sec:1}

Elliptic boundary value problems with oscillatory coefficients play a key role in the mathematical modelling and simulation of complex multiscale problems, for instance transport processes in porous media or the mechanical analysis of composite and multifunctional materials. The characteristic properties of such processes are determined by a complex interplay of effects on multiple non-separable length and time scales. The challenge is that the resolution of all details on all relevant scales may easily lead to a number of degrees of freedom and computational work in a direct numerical simulation which exceed today's computing resources by multiple orders of magnitude. The observation and prediction of physical phenomena from multiscale models, hence, requires insightful methods that effectively represent unresolved scales, i.e., multiscale methods.  

Homogenization is such a multiscale method. It seeks a simplified model that is able to capture the macroscopic responses of the process adequately by a few localized computations on the microscopic scale. Consider, e.g., prototypical second order linear elliptic model problems with highly oscillatory periodic diffusion coefficients that oscillate at frequency $\varepsilon^{-1}$ for some small parameter $0<\varepsilon\ll 1$. Then, the theory of homogenization shows that there exists a constant coefficient such that the corresponding diffusion process represents the macroscopic behaviour correctly. In practice, this yields a two- or multi-scale method that first computes the effective coefficient which is implicitly given through some PDE on the microscopic periodic cell, and then solves the macroscopic effective PDE. This is done for instance in the Multiscale Finite Element Method \cite{EH09MSFEM} or the Heterogeneous Multiscale Method \cite{AEEV12HMM}. In certain cases, the error of such procedures can be quantified in terms of the microscopic length scale $\varepsilon$. The approach and its theoretical foundation can be generalized to certain classes of non-periodic problems. However, the separation of scales, i.e., the separation of the characteristic frequencies of the diffusion coefficient and macroscopic frequencies of interest, seems to be essential for both theory and computation.  

There is a more recent class of numerical homogenization methods that can deal with arbitrarily rough diffusion coefficients beyond scale separation \cite{MP14LOD,HP13oversampl}. While, at first glance, these methods seemed to be only vaguely connected to classical homogenization theory, the recent paper \cite{GP17lodhom} identifies them as a natural generalization of some new characterization of classical homogenization. Another deep connection, which was always believed to exist in the community of domain decomposition methods, is the one between homogenization and domain decomposition. This one was made precise only recently by Kornhuber and Yserentant \cite{KY16LODiterative,KPY16LODiterative,KPY17direct}. By combining their iterative approach to homogenization and the results of \cite{GP17lodhom}, the present paper illuminates the role of domain decomposition in the theory of homogenization and provides homogenization limits without any advanced compactness arguments or two scale limits. In addition, compared with \cite{GP17lodhom}, we are able to drop a technical assumption on some artificial symmetries of the diffusion coefficient with respect to the periodic cell. 

Our new construction of effective coefficients (see Sections~\ref{sec:2}--\ref{sec:3}) is not necessarily any easier than the classical one. For the simple diffusion model problem, this is merely an instance of mathematical curiosity and we do not mean to rewrite homogenization theory. However, the connection between homogenization theory and domain decomposition and, in particular, the method of proof turn out to be very interesting and, moreover, they unroll their striking potential for problems beyond scale separation and periodicity. Using this approach, new theoretical results could be derived and some of them are briefly discussed in Section~\ref{sec:5}.

\section{Model problem and classical homogenization}
For the sake of illustration we restrict ourselves to the simplest possible yet representative and relevant setting. Let $d=2$, $\Omega = [0,1]^2$ and $\varepsilon \Omega := [0, \varepsilon]^2$. Moreover, let $A_1 \in L^{\infty}(\Omega; \mathbb{R}^{2\times 2})$ be a symmetric, uniformly elliptic, $\Omega$-periodic (matrix-valued) coefficient and let $\Aeps(x):=A_1(\frac{x}{\varepsilon})$, $x \in \Omega$. We denote by $V:=H^1_{\#}(\Omega)_{/\mathbb{R}}$ the equivalence class of $\Omega$-periodic functions in $H^1(\Omega)$ factorised by constants, and similarly, by $\Veps:=H^1_{\#}(\varepsilon \Omega)_{/\mathbb{R}}$ for their $\varepsilon$-periodic counterparts.
The model problem under consideration then reads: given $f\in L^2(\Omega)$, find a function $u_\varepsilon \in V$ such that 
\begin{equation}\label{modelpb}
\int_{\Omega} \Aeps(x) \nabla u_{\varepsilon}(x) \cdot \nabla v(x) \dx = \int_{\Omega} f(x) v(x) \dx,
\end{equation}
for all $v \in V$. In order to ensure the well-posedness of the problem, we assume that $\Aeps\in \mathcal{M}_{\alpha\beta}$, where $\mathcal{M}_{\alpha\beta}$ is defined as
\begin{equation*}
\mathcal{M}_{\alpha \beta} := \{A \in L^{\infty}(\Omega) \mid \alpha |\xi|^2 \leq \xi \cdot A(x)\xi \leq \beta |\xi|^2 \text{ for all } \xi \in \mathbb{R}^2 \text{ and a.a. } x \in \Omega\}.
\end{equation*} 

The idea behind classical homogenization is to look for a so-called effective (homogenized) coefficient $A_0 \in \mathcal{M}_{\alpha \beta}$ so that the solution $u_0 \in V$ of the problem 
\begin{equation}\label{hompb}
\int_{\Omega} A_{0} \nabla u_{0} \cdot \nabla v \dx = \int_{\Omega} f v \dx, 
\end{equation}
for all $v \in V$, represents the limit of the sequence $\{u_{\varepsilon}\}_{\varepsilon>0}$ of solutions of the problem \eqref{modelpb}. 
In general, explicit representations of effective coefficients are not known, except for the simple case of the one-dimensional or (locally) periodic setting. However, the so-called energy method of Murat and Tartar ($\cite{Mur78}$) or the two-scale convergence (\cite{All92twosc}) provide us with the following form
\begin{equation}\label{homcoeff}
\biggl(A_0\biggr)_{kj} = \int_{\Omega} \biggl( A_{1}(x)(e_j +  \nabla w_j(x)) \biggr)\cdot\biggl( e_k + \nabla w_k(x) \biggl)\dx,
\end{equation}
where $w_j$ are defined as the unique solutions in $V$ of the so-called cell problems
\begin{equation*}
\int_{\Omega} A_1(x) \left(\nabla w_j (x) - e_j\right) \cdot \nabla v(x) \dx = 0,
\end{equation*}
for all $v \in V$, with the canonical basis $(e_j)_{j=1}^2$ of $\mathbb{R}^2$. The substitution $x \mapsto \frac{x}{\varepsilon}$ yields 
\begin{align}
\nonumber
0 &= \varepsilon^{-2} \int_{\varepsilon \Omega} A_1\Bigl(\frac{x}{\varepsilon}\Bigr) \Bigl(\nabla \underbrace{w_j \Bigl(\frac{x}{\varepsilon}}_{=:\hat{q}_j(x)}\Bigr) - e_j\Bigr) \cdot \nabla \underbrace{v\Bigl(\frac{x}{\varepsilon}\Bigr)}_{=:v_{\varepsilon}(x)} \dx \\&= \int_{\Omega} \Aeps(x) (\nabla \hat{q}_j(x)-e_j) \cdot \nabla v_{\varepsilon}(x) \dx.\label{epseq}
\end{align}	
Since all functions $v_{\varepsilon}$ in $\Veps$ can be written as $v(\frac{x}{\varepsilon})$ for a certain function $v \in V$, equation \eqref{epseq} yields that the function $\hat{q}_j \in \Veps$ solves 
\begin{align}\label{epseqVeps}
\int_{\Omega} \Aeps(x) (\nabla \hat{q}_j(x)-e_j) \cdot \nabla v_{\varepsilon}(x) \dx = 0,
\end{align}	
for all $v_{\varepsilon} \in \Veps$. Moreover, $\hat{q}_j \in \Veps\subset V$ solves the same problem in the space $V$, i.e., 
\begin{align*}
\int_{\Omega} \Aeps(x) (\nabla \hat{q}_j(x)-e_j) \cdot \nabla v(x) \dx = 0,
\end{align*}	
for all $v \in V$, since the solution of an elliptic model problem with periodic data (coefficient, source function) is also periodic, with the same period. 

\section{Novel characterization of the effective coefficient}
\label{sec:2}
In order to define the effective coefficient from the alternative perspective of finite elements, we first introduce the necessary notation on meshes, spaces, and interpolation operators.

We consider structured triangulations of $\Omega=[0, 1]^2$ as depicted in Figure \ref{fig:triangulation}, 
where the triangles $T$ form the triangulation $\mathcal{T}_H$ and the boldface squares $Q$ are part of the square mesh $\mathcal{Q}_H$. Denote the set of nodes by $\mathcal{N}_{\mathcal{T}_H}=\mathcal{N}_{\mathcal{Q}_H}$.
Since we are working with periodic boundary conditions, we will frequently understand  $\mathcal{Q}_H$ and $\mathcal{T}_H$ as periodic partitions (or partitions of the torus or partitions of the whole $\mathbb{R}^2$), i.e., we identify opposite faces of the unit square.
The parameter $H$ denotes the length of the quadrilaterals and is supposed to be  not smaller than the microscopic length scale $\varepsilon$ of the model problem.

\begin{figure}[h]
	\begin{minipage}{0.45\textwidth}
		\includegraphics[scale=0.16]{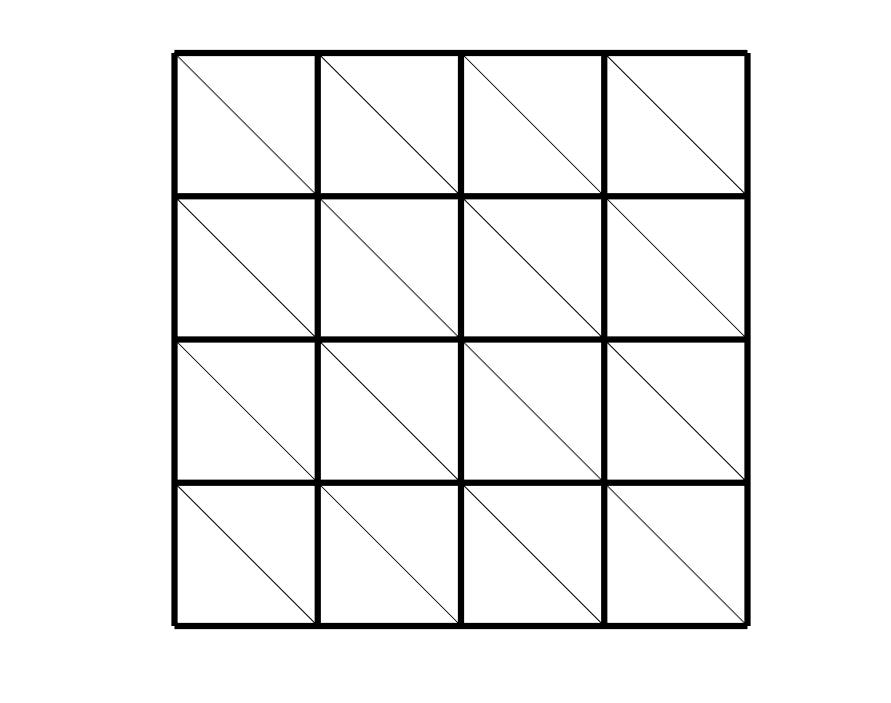}
	\end{minipage}
	\begin{minipage}{0.45\textwidth}
		\includegraphics[scale=0.16]{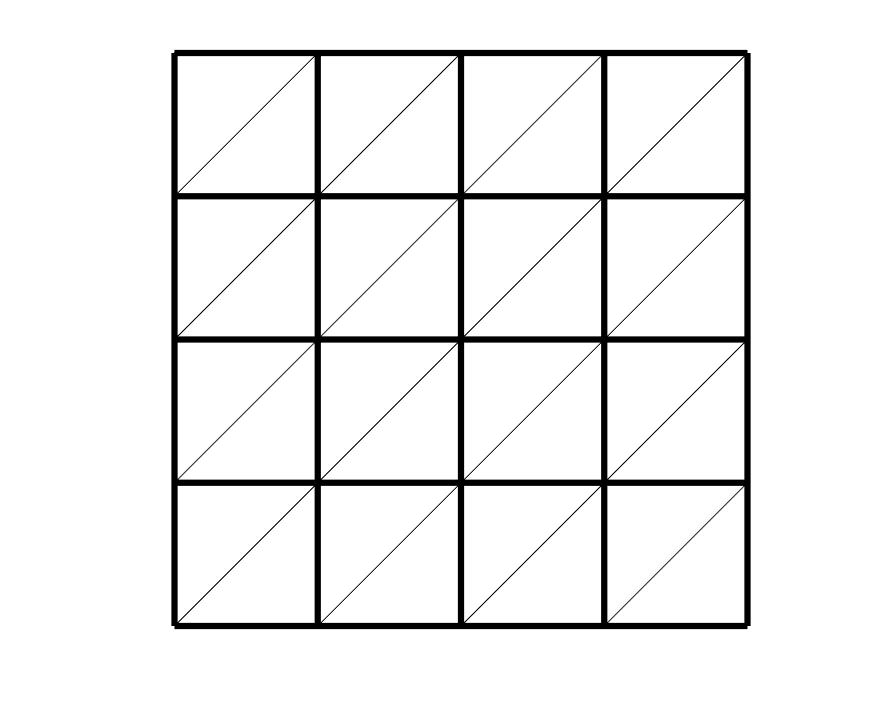}
	\end{minipage}
	\caption{Admissible triangulations.}	\label{fig:triangulation}
\end{figure}

Let  $\mathcal{P}_1(\mathcal{T}_H)$ denote the space of globally continuous piecewise affine functions on $\Omega$ with periodic boundary conditions.
As in the continuous case with $V$, we also factor out the constants here, i.e., in fact we consider $(\mathcal{P}_1(\mathcal{T}_H))_{/\mathbb{R}}$, but still write $\mathcal{P}_1(\mathcal{T}_H)$ for simplicity.
Since $\varepsilon\leq H$ is assumed, the finite element method with the space $\mathcal{P}_1(\mathcal{T}_H)$ does not yield faithful approximations of the solution $u_\varepsilon$ to \eqref{modelpb}; see, e.g., \cite[Sec. 1]{Pet15LODreview}.
We introduce a bounded local linear projection operator $I_H$: $V \rightarrow \mathcal{P}_1(\mathcal{T}_H)$, which can be seen as a composition $I_H:=E_H \circ \Pi_H$, where a function $v \in V$ is first approximated on every element $T \in \mathcal{T}_H$ by its $L^2$-orthogonal projection $\Pi_H$ onto the space of affine functions.
Hence, a possibly globally discontinuous function ${\Pi}_{H}v$ is obtained. In the second step $E_H$, the values at the inner vertices of the triangulation are averages of the respective contributions from the single elements, i.e.,
\begin{equation*}
E_H \circ \Pi_H (v) (z):= \frac{1}{\# \{T \in \mathcal{T}_H, z \in T\}}\sum_{\substack{T \in \mathcal{T}_H \\ z \in T}} \Pi_H (v)|_T (z)
\end{equation*}
for all vertices $z$, where the triangulation is understood in a periodic manner, see \cite{OV16lodtwosc}.

Let $W:=\text{kern} I_H$ be the kernel of the quasi-interpolation operator $I_H$.
It can be seen as the set of rapidly oscillating functions, which cannot be captured by standard finite elements functions on the (coarse) mesh $\mathcal{T}_H$.
Motivated by the reformulation \eqref{epseqVeps} of the cell problems and the interpretation of $W$ as rapidly oscillating functions, 
we define now the correctors $q^\infty_{Q,j}$ as the unique solutions in $W$  of the following variational problems
\begin{align}\label{correceq}
\int_{\Omega} \Aeps(x) \nabla q^{\infty}_{Q,j}(x) \cdot \nabla w(x) \dx = \int_{Q} \Aeps(x) e_j \cdot \nabla w(x) \dx ,
\end{align}	
for all $w \in W$, and the correctors are defined for every $Q \in \mathcal{Q}_H$, $j=1,2$. We define the following w.r.t. $\mathcal{Q}_H$ piecewise constant numerical coefficient $A^{\infty}_H$ which will play the main role in Proposition $\ref{Prop}$:
\begin{equation}
\label{eq:AHinfty}
\biggl[A^{\infty}_{H|Q}\biggr]_{kj} = \frac{1}{|Q|}\int_{Q} \Aeps(x)e_j \cdot e_k \dx - \frac{1}{|Q|} \int_{\Omega} \Aeps(x)  \nabla q^{\infty}_{Q,j}(x) \cdot e_k \dx,
\end{equation}
for all $Q \in \mathcal{Q}_H$, $k,j=1,2$.
\begin{proposition}\label{Prop}
	In the case that the mesh size $H$ is an integer multiple of $\varepsilon$, the coefficient $A^{\infty}_H$ coincides with the homogenized coefficient $A_0$ from classical homogenization defined in \eqref{homcoeff}.
\end{proposition}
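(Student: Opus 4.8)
The plan is to show that the piecewise-constant coefficient $A^\infty_H$ is in fact constant (independent of $Q$) and equal to $A_0$, by exploiting the periodicity structure that emerges when $H/\varepsilon\in\mathbb{N}$. The key observation is that when $H$ is an integer multiple of $\varepsilon$, the space $W=\ker I_H$, restricted to its interaction with a single cell $Q$, should "see" the $\varepsilon$-periodic structure, and the corrector problem \eqref{correceq} should reduce to the cell problem \eqref{epseqVeps}. So the first step is to compare the two corrector equations: rewrite the cell problem solution $\hat q_j\in\Veps$ (which solves \eqref{epseqVeps} for all test functions in $\Veps$) against the corrector $q^\infty_{Q,j}\in W$. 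I would first establish the inclusion $\Veps\subset W$ — i.e.\ that $\varepsilon$-periodic functions (mod constants) lie in the kernel of $I_H$ when $H/\varepsilon\in\mathbb{N}$. This is plausible because on each triangle $T\in\mathcal{T}_H$ an $\varepsilon$-periodic function averages (via $\Pi_H$) to something whose affine part vanishes after the nodal averaging $E_H$, using that $T$ contains an integer number of periods. This inclusion is the structural heart of the argument.

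Second, granting $\Veps\subset W$, I would test \eqref{correceq} with $w\in\Veps$ and test \eqref{epseqVeps} (suitably localized: note \eqref{correceq} has the source integrated only over $Q$, not all of $\Omega$) with the same functions, and try to identify $q^\infty_{Q,j}$ with the restriction/periodic extension of $\hat q_j$ — more precisely, I expect $\nabla q^\infty_{Q,j}$ to equal $\nabla\hat q_j$ on $Q$ in the relevant averaged sense, or that the difference is $I_H$-invisible. A cleaner route may be: decompose the full-space corrector problem $\int_\Omega \Aeps\nabla q\cdot\nabla w = \int_\Omega \Aeps e_j\cdot\nabla w$ for $w\in W$, whose solution (if $\Veps\subset W$ and by $\varepsilon$-periodicity of the data) is exactly $-\hat q_j$ up to sign conventions matching \eqref{epseqVeps}; then write $\sum_{Q\in\mathcal{Q}_H} q^\infty_{Q,j}$ and use linearity plus the partition $\Omega=\bigcup Q$ to relate the localized correctors to the global one. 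The periodicity ensures each $Q$ contributes identically.

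Third, once the corrector identification is in hand, I would insert it into the definition \eqref{eq:AHinfty} of $A^\infty_{H|Q}$ and compare term by term with \eqref{homcoeff}. The first term $\frac{1}{|Q|}\int_Q \Aeps e_j\cdot e_k$ equals $\int_\Omega A_1 e_j\cdot e_k$ by the change of variables $x\mapsto x/\varepsilon$ and $\varepsilon$-periodicity (this uses $H/\varepsilon\in\mathbb{N}$ so that $Q$ is a union of full periods). For the second term I would need to massage $\frac{1}{|Q|}\int_\Omega \Aeps\nabla q^\infty_{Q,j}\cdot e_k$ into the form $-\int_\Omega A_1\nabla w_j\cdot(e_k+\nabla w_k)$: here I would use that $e_k$ differs from $e_k+\nabla w_k$ by a gradient of a $W$-function (again via $\Veps\subset W$), so I may replace $e_k$ by $e_k+\nabla w_k$ in the test slot using the corrector equation \eqref{correceq} itself, turning the expression symmetric in $j,k$ and matching \eqref{homcoeff}. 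The symmetry of $A_1$ and the standard "sum of a gradient test" trick (add $\nabla q^\infty_{Q,k}$ as a test function) should close the loop; this is where the removal of the artificial symmetry assumption of \cite{GP17lodhom} relative to the periodic cell becomes visible, since we only use symmetry of $A_1$ as a matrix, never any geometric symmetry.

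The main obstacle I anticipate is the first step — rigorously proving $\Veps\subset W$, i.e.\ that $I_H$ annihilates $\varepsilon$-periodic functions when $H/\varepsilon\in\mathbb{N}$ — and, relatedly, controlling the boundary/interface effects of the \emph{localized} source $\int_Q(\cdots)$ versus a global one in \eqref{correceq}. The quasi-interpolation $I_H=E_H\circ\Pi_H$ involves the nodal averaging $E_H$, and one must check carefully that the affine $L^2$-projection of a mean-zero $\varepsilon$-periodic function on a triangle consisting of whole periods has vanishing affine part — the constant part is killed by the quotient by $\mathbb{R}$, but the linear part requires the integer-period cancellation $\int_T x\,\phi(x/\varepsilon)\dx$ type identities. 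Once this is pinned down, the rest is bookkeeping with changes of variables and the symmetry trick.
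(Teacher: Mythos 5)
Your overall strategy coincides with the paper's: prove the inclusion $\Veps\subset W$, identify $\sum_{Q\in\mathcal{Q}_H}q^{\infty}_{Q,j}$ with the $\varepsilon$-periodic cell corrector $\hat q_j$ by uniqueness of the global problem in $W$, and then use the matrix symmetry of $\Aeps$ together with \eqref{correceq} (tested with $q_j$) and the summed equation (tested with $q^{\infty}_{Q,k}$) to turn the second term of \eqref{eq:AHinfty} into an integral over $Q$ alone, after which a change of variables gives \eqref{homcoeff}. Your ``cleaner route'' in the second step and the symmetry trick in the third step are exactly the paper's steps, and your remark that only the symmetry of $A_1$ as a matrix is used (no geometric symmetry of the cell) matches the paper's point about dropping the assumption of \cite{GP17lodhom}.

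There is, however, a genuine gap at the step you yourself call the structural heart, $\Veps\subset W$, and the route you sketch for it would fail. You propose to verify that the elementwise projection $\Pi_H v_\varepsilon|_T$ of an $\varepsilon$-periodic function has vanishing affine part on ``a triangle consisting of whole periods''. But a triangle $T\in\mathcal{T}_H$ is not a union of $\varepsilon$-cells (only the squares $Q\in\mathcal{Q}_H$ are), and the first moments do not cancel: for $v_\varepsilon(x)=\sin(2\pi x_1/\varepsilon)$ and $H=\varepsilon$ one has $\int_T x_2\, v_\varepsilon(x)\dx\neq 0$ on the reference triangle, so $\Pi_H v_\varepsilon|_T$ is a genuinely non-constant affine function and the elementwise claim is false. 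The correct (and simpler) argument, which the paper uses, works only at the level of nodal values after the averaging $E_H$: since the triangulation is structured and periodic and $H$ is an integer multiple of $\varepsilon$, every node $z$ has a congruent element patch that sees the same translated copy of the $\varepsilon$-periodic data, so the values $I_H(v_\varepsilon)(z)=(E_H\circ\Pi_H)(v_\varepsilon)(z)$ coincide for all $z\in\mathcal{N}_{\mathcal{T}_H}$; hence $I_H v_\varepsilon$ is a global constant, which is the zero element of the quotient space, and $\Veps\subset W$ follows without any moment cancellation on single triangles. Relatedly, your first guess that $\nabla q^{\infty}_{Q,j}$ equals $\nabla\hat q_j$ on $Q$ is not what happens — the localized corrector is a global, non-periodic function, and there are no boundary/interface effects to control — only the sum over all $Q\in\mathcal{Q}_H$ reproduces $\hat q_j$, which is the version you should retain.
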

\begin{proof}
	We will first show that the function $q_j := \sum_{Q \in \mathcal{Q}_H} q^{\infty}_{Q,j}$ coincides with the corrector $\hat{q}_j \in \Veps$, the unique solution of the problem \eqref{epseqVeps}. 	
	The crucial observation needed for the proof is the fact that the space of $\varepsilon$-periodic functions is contained in the kernel $W$ of the quasi-interpolation operator $I_H$, in the case of the present setting with the triangulations $\mathcal{T}_H$ and $\mathcal{Q}_H$. To see this we observe that, for an $\varepsilon$-periodic function $v_{\varepsilon}\in \Veps$, the values $I_H(v_{\varepsilon})(z)$ coincide
	for all $z\in \mathcal{N}_{\mathcal{T}_H}$. That is, $I_H(v_{\varepsilon})\in \mathcal{P}_1(\mathcal{T}_H)$ is a global constant. As we factored out the constants, we can take the zero function as representative, i.e., $I_H(v_{\varepsilon})=0$.
	
	Moreover, summing up the equations \eqref{correceq} over all $Q \in \mathcal{Q}_H$ and taking advantage of the symmetry of $\Aeps$, we get that $q_j := \sum_{Q \in \mathcal{Q}_H}q_{Q, j}^\infty$ solves
	\begin{align}
	\int_{\Omega} \Aeps(x)\nabla q_j(x) \cdot \nabla w(x) \dx &= \int_{\Omega} \Aeps(x) e_j \cdot \nabla w(x) \dx \\ &= \int_{\Omega} \Aeps(x) \nabla w(x) \cdot e_j \dx, \label{neweq} 
	\end{align} 
	for all $w \in W$, and in particular for all $w\in \Veps$. The combination of \eqref{epseqVeps} and \eqref{neweq} readily yields that $q_j\equiv \hat{q}_j$, $j=1,2$. Moreover, \eqref{neweq} with $w = q^{\infty}_{Q,k}$ implies 
	\begin{align*}
	\int_{\Omega} \Aeps(x)\nabla q^{\infty}_{Q,k}(x) \cdot \nabla e_j \dx &= \int_{\Omega} \Aeps(x) \nabla q_j(x) \cdot \nabla q^{\infty}_{Q,k}(x) \dx \\ &= \int_{\Omega} \Aeps(x) \nabla q^{\infty}_{Q,k}(x) \cdot \nabla q_j(x) \dx \\ &= \int_Q \Aeps(x) e_k \cdot \nabla q_j(x) \dx.  
	\end{align*}
	Hence, in the definition of $A^{\infty}_H$ we can replace the second term, namely
	\begin{align*}
	\biggl(A^{\infty}_{H|Q}\biggr)_{kj} &= \frac{1}{|Q|}\int_{Q} \Aeps(x)e_j \cdot e_k \dx - \frac{1}{|Q|} \int_{Q} \Aeps(x)  e_j \cdot \nabla q_k(x) \dx \\ &= \frac{1}{|Q|} \int_{Q} \Aeps(x)  e_j \cdot (e_k - \nabla \hat{q}_k(x)) \dx \\ &= \biggl(A_{0}\biggr)_{kj},
	\end{align*}
	for $j,k = 1,2$.
\end{proof}

\section{Numerical effective coefficient by domain decomposition}
\label{sec:3}
The correctors $q_{Q, j}^\infty$ defined in the previous section require the solution of a global problem involving the oscillating coefficient $\Aeps$.
Employing domain decomposition, we introduce localized variants and then use arguments from the theory of iterative (domain decomposition) methods as presented in \cite{KPY16LODiterative, KY16LODiterative} to show that the error decays exponentially in the number of iterations.
With the localized correctors, we then introduce an effective localized coefficient $A^\ell_H$ which is piecewise constant on $\mathcal{Q}_H$.

Let $\omega_i$ be the union of all squares $Q\in \mathcal{Q}_H$ having the vertex $z_i$ as a corner and let 
\begin{equation}    \label{Wi} 
W_i=\{v-I_H v\,|\,v\in H^1_0(\omega_i)\}.
\end{equation}
We emphasize that $\omega_i$ is understood as a subset of $\mathbb{R}^2$, i.e., it is continued over the periodic boundary.
The functions in $W_i$ vanish outside a small 
neighbourhood of the vertex $z_i$.
The $W_i$ are closed subspaces of the kernel $W$ 
of $I_H$, see \cite{KPY16LODiterative}.
Let $P_i$ be the $a_\varepsilon$-orthogonal projection from 
$V$ to $W_i$, defined via the 
equation
\begin{equation}    \label{Pi}
a_\varepsilon(P_iv,w_i)=a_\varepsilon(v,w_i),\quad \forall w_i\in W_i.
\end{equation}
Introducing the with respect to the bilinear form 
$a_\varepsilon(\cdot, \cdot)$ symmetric operator
\begin{equation}    \label{P}
P=P_1+P_2+\cdots+P_n,
\end{equation}
the following properties are proved in \cite{KPY16LODiterative}:

\begin{lemma} \label{lemmaP}
	There are constants $K_1$ and $K_2$, independent of $H$ and $\varepsilon$, such that
	\[K_1^{-1}a_\varepsilon(v,v)\leq a_\varepsilon(Pv, v)\leq K_2 a_\varepsilon (v,v)\]
	for all $v\in V$.
	Moreover, for an appropriate scaling factor $\vartheta$ only depending on $K_1$ and $K_2$, there exists a positive constant $\gamma<1$ such that
	\begin{equation} \label{approxP}
	\|\operatorname{id}-\vartheta P\|_{\mathcal{L}(V,V)}\leq \gamma.
	\end{equation}
\end{lemma}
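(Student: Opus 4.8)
\noindent\emph{Proof plan.} The two estimates are an instance of the abstract convergence theory of additive Schwarz (parallel subspace correction) methods, applied to the subspaces $W_1,\dots,W_n$ of $W$. I would prove them in three steps: (i) the upper bound, by a finite‑overlap / colouring argument; (ii) the lower bound — which can only hold on the range $W$ of $P$, i.e.\ on the subspace where the associated iteration actually lives — by exhibiting for every $w\in W$ a decomposition into the $W_i$ whose energies sum up in an $H$‑ and $\varepsilon$‑stable way; and (iii) the contraction \eqref{approxP}, by elementary spectral calculus for the $a_\varepsilon$‑symmetric operator $P$. Throughout, $\Aeps$ enters only through the equivalence $\alpha\|\nabla v\|_{L^2(\Omega)}^2\le a_\varepsilon(v,v)\le\beta\|\nabla v\|_{L^2(\Omega)}^2$ coming from $\Aeps\in\mathcal{M}_{\alpha\beta}$, and this is exactly what makes every constant independent of $\varepsilon$.

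For (i) I would use that each $P_i$ is the $a_\varepsilon$‑orthogonal projection onto $W_i$, so $a_\varepsilon(P_iv,v)=a_\varepsilon(P_iv,P_iv)$ and $a_\varepsilon(Pv,v)=\sum_i a_\varepsilon(P_iv,P_iv)$. Because $\mathcal{Q}_H$ is structured, its vertex set decomposes into a fixed number $N_c$ of colour classes such that two patches $\omega_i,\omega_j$ carrying the same colour are disjoint; then $W_i$ and $W_j$ have disjoint supports, the $W_i$ of one colour are mutually $a_\varepsilon$‑orthogonal, and $\sum_{i\in c}P_i$ is an $a_\varepsilon$‑orthogonal projection, so $\sum_{i\in c}a_\varepsilon(P_iv,P_iv)\le a_\varepsilon(v,v)$. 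Summing over the $N_c$ colours gives the upper bound with $K_2=N_c$, independent of $H$ and $\varepsilon$.

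For (ii) it suffices, by the abstract theory, to produce for every $w\in W$ a decomposition $w=\sum_i w_i$, $w_i\in W_i$, with $\sum_i a_\varepsilon(w_i,w_i)\le K_1\,a_\varepsilon(w,w)$; this delivers $K_1^{-1}a_\varepsilon(v,v)\le a_\varepsilon(Pv,v)$ on $W$. I would take the nodal hat functions $\{\phi_i\}$ of $\mathcal{Q}_H$ (a partition of unity with $\supp\phi_i=\overline{\omega_i}$ and $\|\nabla\phi_i\|_{L^\infty}\lesssim H^{-1}$), set $\tilde w_i:=\phi_i w\in H^1_0(\omega_i)$ and $w_i:=\tilde w_i-I_H\tilde w_i\in W_i$; since $I_H$ is linear and $I_Hw=0$, $\sum_i w_i=w-I_Hw=w$. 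Using $H^1$‑stability and locality of $I_H$ (constants free of $H,\varepsilon$) together with the product rule one gets
\[
\|\nabla w_i\|_{L^2(\Omega)}\lesssim\|\nabla\tilde w_i\|_{L^2(\omega_i)}\lesssim\|\nabla w\|_{L^2(\omega_i)}+H^{-1}\|w\|_{L^2(\omega_i)} ;
\]
the term $H^{-1}\|w\|_{L^2(\omega_i)}$ is then absorbed via the Poincaré‑type estimate $\|w\|_{L^2(\omega_i)}\lesssim H\|\nabla w\|_{L^2(\widehat{\omega}_i)}$, valid for $w\in\ker I_H$ on a fixed enlargement $\widehat{\omega}_i$ of $\omega_i$. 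Squaring, summing over $i$, and using the finite overlap of the patches together with the norm equivalence then yields $\sum_i a_\varepsilon(w_i,w_i)\le K_1\,a_\varepsilon(w,w)$ with $K_1$ depending only on $\alpha,\beta$, the shape regularity of the mesh, and $\|I_H\|$ — in particular on neither $H$ nor $\varepsilon$ (the $H$ and $H^{-1}$ cancel; as a by‑product $\sum_i W_i=W$). For (iii), $P$ is $a_\varepsilon$‑symmetric with spectrum in $[K_1^{-1},K_2]$ on $W$, so $\vartheta:=2/(K_1^{-1}+K_2)$ makes $\operatorname{id}-\vartheta P$ have spectral radius $\gamma:=(K_2-K_1^{-1})/(K_2+K_1^{-1})<1$ there, which is \eqref{approxP}.

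The crux is step (ii), specifically obtaining $K_1$ \emph{independent of} $H$: the $\varepsilon$‑independence is free from $\mathcal{M}_{\alpha\beta}$, but everything hinges on the Poincaré/approximation bound $\|w\|_{L^2}\lesssim H\|\nabla w\|_{L^2}$ for $w\in\ker I_H$, which must come with an $H$‑independent constant in order to cancel the $H^{-1}$ produced when the partition of unity is differentiated. Establishing this — equivalently, that $I_H=E_H\circ\Pi_H$ is a bounded projection onto $\mathcal{P}_1(\mathcal{T}_H)$ with a first‑order approximation property on the given periodic structured triangulation — is the technical heart of the argument, and it is precisely here that the two‑step construction of $I_H$ is exploited.
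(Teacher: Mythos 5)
Your proposal is correct, but note that the paper itself does not prove Lemma \ref{lemmaP} at all: it is quoted from \cite{KPY16LODiterative}, and your three steps (colouring argument for $K_2$, a partition-of-unity decomposition stabilized by the $H^1$-stability and first-order approximation property of $I_H$ for $K_1$, and elementary spectral calculus for \eqref{approxP}) reconstruct essentially the standard subspace-decomposition proof given in that reference. The one substantive point where you deviate from the statement as printed is your restriction of the lower bound and of the contraction to $W=\operatorname{kern}I_H$, and you are right to do so: with $P=P_1+\cdots+P_n$ as defined in \eqref{P}, $P$ vanishes on the $a_\varepsilon$-orthogonal complement of $W$ in $V$ (a nontrivial finite-dimensional space), so the estimate $K_1^{-1}a_\varepsilon(v,v)\leq a_\varepsilon(Pv,v)$ and the bound $\|\operatorname{id}-\vartheta P\|\leq\gamma<1$ can only hold on $W$, not literally for all $v\in V$; the lemma is to be read (and is used) in exactly this restricted sense, since in the proof of Proposition \ref{propAerror} both $q_{Q,j}^\infty$ and all iterates $q_{Q,j}^\ell$ belong to $W$, so the contraction on $W$ is all that is needed. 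Your identification of the Poincar\'e-type bound $\|w\|_{L^2(\omega_i)}\lesssim H\|\nabla w\|_{L^2(\widehat{\omega}_i)}$ for $w\in\operatorname{kern}I_H$ as the technical heart, with constants independent of $H$ and $\varepsilon$, matches the role this estimate plays in \cite{KPY16LODiterative}.
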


Starting from $q_{Q,j}^0=0$, $j=1,2$, the localized correctors $q_{Q, j}^\ell$ are defined for all $Q \in \mathcal{Q}_H$ via
\begin{equation}  \label{correclocal}
q_{Q, j}^{\ell+1}=q_{Q, j}^\ell+\vartheta P(x_j\operatorname{1}_Q-q_{Q, j}^\ell), \qquad j=1,2,
\end{equation}
where $\operatorname{1}_Q$ denotes the characteristic function of $Q$ and $x_j$ denotes the $j$-th component of the (vector-valued) function $x\mapsto x$.
The scaling factor $\vartheta$ is chosen as discussed in Lemma \ref{lemmaP}.
The correction $P(x_j\operatorname{1}_Q-q_{Q, j}^\ell)$ is the sum of its components $C_i^\ell=P_i(x_j\operatorname{1}_Q-q_{Q, j}^\ell)$ in the subspaces $W_i$ of $W$, where the $C_i^\ell$ solve the local equations
\begin{equation}
\label{correclocaliter}
a_\varepsilon(C_i^\ell, w_i)=a_\varepsilon(x_j\operatorname{1}_Q, w_i)-a_\varepsilon(q_{Q, j}^\ell, w_i), \qquad \forall w_i\in W_i.
\end{equation}
The sloppy notation using $1_Q$ as argument in $a_\varepsilon$ is to denote that the integration is over the element $Q$ only, i.e., $a_\varepsilon(x_j\operatorname{1}_Q, w_i)=\int_Q \Aeps e_j\cdot \nabla w_i\, dx$.
Since the local projections $P_i$ only slightly increase the support of a function, we deduce inductively that the support of $q_{Q, j}^\ell$ is contained in an $\ell H$-neighbourhood of $Q$.
In particular, in each step of \eqref{correclocal} only a few local problems of type \eqref{correclocaliter} have to be solved.

We now replace $q_{Q, j}^\infty$ by its localized variant $q_{Q, j}^\ell$ in the definition of the numerical effective coefficient.
This procedure is justified by an exponential error estimate in Proposition \ref{propAerror}.
We define the piecewise constant (on the mesh $\mathcal{Q}_H$) (localized) effective matrix $A^\ell_H$ via
\begin{equation}
\label{AHl}
\Bigl(A_H^\ell|_{Q}\Bigr)_{k j} = \frac{1}{|Q|}\int_Q \Aeps(x) e_j\cdot e_k\, dx-\frac{1}{|Q|}\int_{\Omega}\Aeps \nabla q_{Q, j}^\ell(x)\cdot e_k\, dx.
\end{equation}

Since the numerical effective coefficient \eqref{eq:AHinfty} is the ``true'' one in the sense that $A_H^\infty=A_0$, we simply need to estimate the error of the iterative approximation.

\begin{proposition}
	\label{propAerror}
	Let H be an integer multiple of $\varepsilon$ and let the localization parameter $\ell$ be chosen of order $\ell\approx |\log H|$. Then,
	\begin{equation}
	\|A_H^\infty-A_H^\ell\|_{L^\infty (\Omega)}\lesssim H.
	\end{equation}
\end{proposition}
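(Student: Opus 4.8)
The plan is to reduce the $L^\infty$-error between the piecewise-constant coefficients $A_H^\infty$ and $A_H^\ell$ to an energy-norm error between the exact corrector $q_{Q,j}^\infty$ and its iteratively localized counterpart $q_{Q,j}^\ell$, and then to invoke Lemma~\ref{lemmaP} to control the latter geometrically in the iteration count $\ell$. First, I would subtract the two defining formulas \eqref{eq:AHinfty} and \eqref{AHl}: the first terms $\frac{1}{|Q|}\int_Q \Aeps e_j\cdot e_k$ cancel exactly, leaving
\begin{equation*}
\Bigl(A_H^\infty|_Q - A_H^\ell|_Q\Bigr)_{kj} = -\frac{1}{|Q|}\int_\Omega \Aeps \nabla\bigl(q_{Q,j}^\infty - q_{Q,j}^\ell\bigr)\cdot e_k \dx.
\end{equation*}
By Cauchy--Schwarz, boundedness of $\Aeps$ (i.e.\ $\Aeps\in\mathcal{M}_{\alpha\beta}$), and $|Q|=H^2$, each matrix entry is bounded by $C H^{-2}\cdot H\cdot \|\nabla(q_{Q,j}^\infty - q_{Q,j}^\ell)\|_{L^2(\Omega)}$; more carefully, using ellipticity one can write this as $C H^{-1}\,\|q_{Q,j}^\infty - q_{Q,j}^\ell\|_{a_\varepsilon}$ where $\|\cdot\|_{a_\varepsilon}^2 = a_\varepsilon(\cdot,\cdot)$. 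Hence it suffices to show $\|q_{Q,j}^\infty - q_{Q,j}^\ell\|_{a_\varepsilon}\lesssim H^2$ (uniformly in $Q$ and $j$), which with $\ell\approx|\log H|$ will follow from exponential convergence of the iteration.

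Second, I would identify the fixed point of the iteration \eqref{correclocal}. The exact corrector $q_{Q,j}^\infty\in W$ solves \eqref{correceq}, i.e.\ $a_\varepsilon(q_{Q,j}^\infty, w) = a_\varepsilon(x_j\mathbf{1}_Q, w)$ for all $w\in W$ — in other words $q_{Q,j}^\infty = P_W(x_j\mathbf{1}_Q)$, the $a_\varepsilon$-orthogonal projection of $x_j\mathbf{1}_Q$ onto $W$. One checks that $q_{Q,j}^\infty$ is a fixed point of the map $q\mapsto q + \vartheta P(x_j\mathbf{1}_Q - q)$, because $P$ maps into $W$ and, by the defining relation \eqref{Pi}–\eqref{P} together with $q_{Q,j}^\infty\in W$, we have $P(x_j\mathbf{1}_Q - q_{Q,j}^\infty) = 0$: indeed for each $i$, $a_\varepsilon(P_i(x_j\mathbf{1}_Q - q_{Q,j}^\infty), w_i) = a_\varepsilon(x_j\mathbf{1}_Q - q_{Q,j}^\infty, w_i) = 0$ for all $w_i\in W_i\subset W$ by \eqref{correceq}. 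Subtracting the fixed-point identity from \eqref{correclocal} gives the error recursion
\begin{equation*}
q_{Q,j}^\infty - q_{Q,j}^{\ell+1} = (\operatorname{id} - \vartheta P)\bigl(q_{Q,j}^\infty - q_{Q,j}^{\ell}\bigr),
\end{equation*}
so that $\|q_{Q,j}^\infty - q_{Q,j}^{\ell}\|_{a_\varepsilon} \le \gamma^\ell\,\|q_{Q,j}^\infty - q_{Q,j}^{0}\|_{a_\varepsilon} = \gamma^\ell\,\|q_{Q,j}^\infty\|_{a_\varepsilon}$ by \eqref{approxP} in Lemma~\ref{lemmaP} (note $\operatorname{id}-\vartheta P$ is $a_\varepsilon$-self-adjoint, so its operator norm equals its $a_\varepsilon$-norm as a contraction on $V$, or one restricts to $W$ where the argument lives).

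Third, I would bound the starting defect $\|q_{Q,j}^\infty\|_{a_\varepsilon}$ uniformly. Testing \eqref{correceq} with $w = q_{Q,j}^\infty$ gives $\|q_{Q,j}^\infty\|_{a_\varepsilon}^2 = a_\varepsilon(x_j\mathbf{1}_Q, q_{Q,j}^\infty) = \int_Q \Aeps e_j\cdot\nabla q_{Q,j}^\infty \dx \le \beta^{1/2}|Q|^{1/2}\,\|q_{Q,j}^\infty\|_{a_\varepsilon}$, hence $\|q_{Q,j}^\infty\|_{a_\varepsilon}\le \beta^{1/2}|Q|^{1/2} = \beta^{1/2}H$. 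Combining the three steps: each entry of $A_H^\infty|_Q - A_H^\ell|_Q$ is bounded by $CH^{-1}\cdot\gamma^\ell\cdot H = C\gamma^\ell$, and choosing $\ell\approx|\log H|/|\log\gamma|$, i.e.\ $\ell\approx|\log H|$, yields $\gamma^\ell\lesssim H$, so $\|A_H^\infty - A_H^\ell\|_{L^\infty(\Omega)}\lesssim H$. Taking the $L^\infty$-norm over $\Omega$ is harmless since both coefficients are piecewise constant on the finite mesh $\mathcal{Q}_H$ and the bound is uniform in $Q$.

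The main obstacle, and the only genuinely delicate point, is making the contraction estimate rigorous: Lemma~\ref{lemmaP} bounds $\|\operatorname{id}-\vartheta P\|_{\mathcal{L}(V,V)}$, but the natural norm for the error recursion is the energy norm $\|\cdot\|_{a_\varepsilon}$, and the iterates live in the subspace $W$ (and in fact in an enlarging sequence of local subspaces). One must therefore argue that $\operatorname{id}-\vartheta P$ is a $\gamma$-contraction in $\|\cdot\|_{a_\varepsilon}$ — which holds because $a_\varepsilon$ is equivalent to the $V$-inner product (uniform ellipticity, constants $\alpha,\beta$ independent of $\varepsilon$) and $P$ is $a_\varepsilon$-self-adjoint, so \eqref{approxP} transfers to the energy norm, possibly with a harmless adjustment of $\gamma$ still bounded away from $1$ uniformly in $H,\varepsilon$ — and that $W$ is invariant under $P$ so the recursion stays in $W$. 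Everything else (cancellation of the $\Aeps e_j\cdot e_k$ term, Cauchy--Schwarz, the $|Q|=H^2$ scaling, the a~priori bound on $q_{Q,j}^\infty$) is routine.
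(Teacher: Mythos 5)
Your argument is correct and follows essentially the same route as the paper's own proof: cancel the identical first terms of \eqref{eq:AHinfty} and \eqref{AHl}, use $P(x_j\operatorname{1}_Q-q^{\infty}_{Q,j})=0$ (from \eqref{correceq}) to get the error identity $q^{\infty}_{Q,j}-q^{\ell}_{Q,j}=(\operatorname{id}-\vartheta P)^{\ell}q^{\infty}_{Q,j}$, apply the contraction \eqref{approxP} together with the a priori bound $\|\nabla q^{\infty}_{Q,j}\|\lesssim |Q|^{1/2}$, and conclude with $\ell\approx|\log H|$. The only blemish is the spurious factor $H$ in your Cauchy--Schwarz step (the integral runs over all of $\Omega$, so $\|e_k\|_{L^2(\Omega)}=1$, not $H$), which makes the correct entrywise bound $\lesssim |Q|^{-1}\|\nabla(q^{\infty}_{Q,j}-q^{\ell}_{Q,j})\|_{L^2(\Omega)}\lesssim \gamma^{\ell}H^{-1}$ rather than $\gamma^{\ell}$; this is harmless since the implicit constant in $\ell\approx|\log H|$ then only has to ensure $\gamma^{\ell}\lesssim H^{2}$, exactly as in the paper.
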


\begin{proof}
	We first estimate the error between the correctors $q_{Q, j}^\infty$ and $q_{Q, j}^\ell$.
	Using the definition of $q_{Q, j}^\infty$ in \eqref{correceq}, we deduce that $P(x_j\operatorname{1}_Q)=P(q_{Q, j}^\infty)$.
	Hence, we can characterize the error between the correctors $q_{Q, j}^\infty$ and their localized approximations $q_{Q, j}^\ell$ via
	\[q_{Q, j}^\infty-q_{Q, j}^\ell=(\operatorname{id}-\vartheta P)^\ell q_{Q, j}^\infty.\]
	Using \eqref{approxP}, this yields the exponential convergence of $q_{ Q, j}^\ell$ towards $q_{Q, j}^\infty$, i.e.,
	\begin{equation}\label{approxq}
	\|\nabla(q_{Q, j}^\infty-q_{Q, j}^\ell)\|\lesssim \gamma^\ell \|\nabla q_{Q, j}^\infty\|\lesssim \gamma^\ell |Q|^{1/2}.
	\end{equation}
	
	By the definitions of $A_H^\infty$ in \eqref{eq:AHinfty} and $A_H^\ell$ in \eqref{AHl}, we obtain
	\begin{align*}
	\Bigl|(A_H^\infty|_Q)_{jk}-(A_H^\ell|_Q)_{jk}\Bigr|&=|Q|^{-1}\Biggl|\int_{\Omega}\Aeps\nabla (q_{Q, j}^\ell-q_{Q, j}^\infty)\cdot e_k\, dx\Biggr|\\
	&\lesssim |Q|^{-1} \|e_k\|_{L^2(\Omega)}\|\nabla (q_{Q, j}^\ell-q_{Q, j}^\infty)\|_{L^2(\Omega)}.
	\end{align*}
	Estimate \eqref{approxq} and the choice $\ell\approx |\log H|$ readily imply the assertion.
\end{proof}

The same estimate was previously derived in  \cite{GP17lodhom} with a slightly different localization strategy and with more restrictive conditions on the triangulation. There, the homogenization error in the $L^2$-norm is quantified as follows. 
Let $\Omega$ be convex. Let $u_\varepsilon\in V$ solve \eqref{modelpb} and let $u_0\in V$ be the solution to \eqref{hompb}. For sufficiently small $\varepsilon$, it holds that
\[\|u_\varepsilon-u_0\|_{L^2(\Omega)}\lesssim \varepsilon|\log \varepsilon|^2\|f\|_{L^2(\Omega)}.\]
This estimate recovers the classical result that $u_\varepsilon\to u_0$ strongly in $L^2$ and furthermore states that the convergence is almost linear for right-hand sides $f\in L^2(\Omega)$. We shall emphasize that the proofs of \cite{GP17lodhom} are solely based on standard techniques of finite elements. The authors believe that such a result is also possible in the slightly more general setup of this paper. However, it seems that there is no simple argument but the generalization requires to revise the analysis of \cite{GP17lodhom} step by step which is far beyond the scope of this paper. 

\section{Beyond periodicity and scale separation}
\label{sec:5}

The numerical approach presented in Section \ref{sec:3} does not essentially rely on the assumption of periodicity or separation of scales (between the length scales of the computational domain and the material structures).
Of course, in such general situations, one cannot identify a constant effective coefficient.
Instead the goal is to faithfully approximate the analytical solution by a (generalized) finite element method based on a (coarse) mesh, which does not need to resolve the fine material structures and thereby is computationally efficient.

For this generalization, note that the definition \eqref{correclocal} can be formulated verbatim for any boundary value problem involving a potentially rough, but not necessarily periodic diffusion tensor $A\in L^\infty(\Omega)$.
Moreover, the choice of the function $x_j \operatorname{1}_Q$ in the definition of $q_{Q, j}^\ell$ can be generalized to any function $v\in V$ in the following way. Define the operator $C_T^\ell:V\to W$ inductively via $C_T^0=0$ and
\[C_T^{\ell+1}=C_T^\ell+\vartheta P(\operatorname{id}|_T-C_T^\ell)\]
for all $T\in \mathcal{T}_H$, see \cite{KPY16LODiterative}.
Instead of modifying the diffusion tensor as in the previous sections, we then modify the basis functions and define a generalized finite element method using the test and ansatz spaces $V_H^\ell:=(\operatorname{id}+ C^\ell)\mathcal{P}_1(\mathcal{T}_H)$ with $C^\ell:=\sum_{T\in \mathcal{T}_H}C_T^\ell$.
This method is known as the Localized Orthogonal Decomposition (LOD) \cite{HP13oversampl, MP14LOD,HM14LODbdry,Pet15LODreview} and originally arose from the concept of the Variational Multiscale Method \cite{HFMQ98VMM,HS07VMM}.
Note that mostly a slightly different definition of the correctors $C_T^\ell$ based on patches of diameter $\ell H$ around the element $T$ is used.
The present approach via domain decomposition and iterative solvers was developed recently in \cite{KY16LODiterative,KPY16LODiterative}.
It has been shown in \cite{MP14LOD, HM14LODbdry} for instance, that the method approximates the analytical solution with an energy error of the order $H$ even in the pre-asymptotic regime if the localization parameter $\ell$ is chosen of the order $\ell\approx |\log H|$ as in Proposition \ref{propAerror}.
Hence, the Localized Orthogonal Decomposition can efficiently treat general multiscale problems.
Besides the above mentioned Galerkin-type ansatz with modified ansatz and test functions, Petrov-Galerkin formulations of the method \cite{EGH15LODpetrovgalerkin} may have computational advantages \cite{EHMP16LODimpl} and even meshless methods are possible \cite{HMP14partunity}.

The Localized Orthogonal Decomposition is not restricted to elliptic diffusion problems and has underlined its potential in various applications and with respect to different (computational) challenges.
Starting from the already mentioned application in the geosciences, we underline that the material coefficients are often characterized not only by rapid oscillations but also by a high contrast, i.e., the ratio $\beta/\alpha$ is large.
Many error estimates, also for the standard LOD, are contrast-dependent, but a careful choice of the interpolation operator, see \cite{HM17lodcontrast,PS16lodcontrast}, can overcome this effect.
Apart from simple diffusion problems, porous media \cite{BP16lodporous}, elasticity problems \cite{HP16lodelasticity} or coupling of those such as in poroelasticity \cite{ACMPP18poro} play important roles in these (and many other) applications.
For instance in elasticity theory, not only heterogeneous materials are treated, but also the effect of locking can be reduced by the multiscale method in \cite{HP16lodelasticity}.

Another important area of research are acoustic and electromagnetic wave propagation problems, where the considered prototypical equations are the Helmholtz and Maxwell's equations.
It is well known that standard finite element discretizations of the (indefinite) Helmholtz equation are only well-posed and converging under a rather restrictive resolution condition between the mesh size and the wavenumber.
In a series of paper \cite{BGP15hethelmholtzLOD,GP15scatteringPG,P17LODhelmholtz}, it was analysed that the LOD can relax this resolution condition if the localization parameter grows logarithmically with the wavenumber. For large wavenumbers, this is a great computational gain in comparison to standard numerical methods.
Maxwell's equations, studied in \cite{GHV17lodcurl,Ver17lodcurlindef}, on the other hand, pose a challenge as the involved curl-operator has a large kernel. Moreover, the natural finite element space are N\'ed\'elec's edge elements, for which stable interpolation operators are much less developed than for Lagrange finite elements.
In the context of problems not based on standard Lagrange spaces, we also mention the mixed problem utilizing Raviart-Thomas spaces in \cite{HHM16LODmixed}.
Considering wave problems, the time-dependent wave equation with different time discretizations was studied in \cite{AH17LODwaves, MP18lodwave}. Concerning time-dependency, an important question for the LOD construction is how to deal with time-dependent diffusion tensors.
\cite{HM17lodlagging} presents an a posteriori error estimator in order to adaptively decide which correction to recompute in the next time step.

Apart from the treatment of multiscale coefficients in a variety of partial differential equations, the methodology can also be seen as a stabilization scheme similar as its origin the variational multiscale methods. This has been exploited to deal with the pollution effect in Helmholtz problems mentioned above, for convection dominated diffusion problems \cite{LPS17lodadvec} and, more importantly, to bypass CFL conditions in the context of explicit wave propagation on adaptive meshes \cite{PS17lodwavecfl}. 

Further unexpected applications are linear and nonlinear eigenvalue problems \cite{MP15LODeig,MP17lodeigenvalue}, in particular the quantum-physical simulation based on the Gross-Pitaevskii equation. While the LOD can be employed to speed-up ground state computations for rather rough potentials \cite{HM14LODBOC}, the underlying technique of localization by domain decomposition turned out to be of great value to provide (analytical) insight into the phenomenon of Anderson localization in this context. The recent paper \cite{APP18anderson} predicts and quantifies the emergence of localized eigenstates and might inspire progress regarding the understanding of localization effects which are observed for many other problems as well.

The present contribution aimed at unifying the view of the LOD and classical homogenization and domain decomposition. As already mentioned, close connections exist with \cite{GP17lodhom} and its extension to stochastic homogenization \cite{GP17lodhomstoch}. Further applications involve a multilevel generalization of LOD named gamblets \cite{Owhadi2017} (due to a possible game-theoretic interpretation). This multilevel variant allows surprising results such as a sparse representation of the expected solution operator for random elliptic boundary value problems \cite{FP18expecsol} which may inspire new computational strategies for uncertainty quantification in the future.

\end{document}